%
%
%
%
%
%
\documentclass{svjour3}                     
\smartqed  
%
\usepackage{graphicx}
\usepackage{amssymb,amsfonts}
\usepackage{amsmath}
\usepackage{algorithm}
\usepackage{algorithmic}
\usepackage{latexsym}
\numberwithin{theorem}{section}
\numberwithin{definition}{section}
\numberwithin{proposition}{section}
\numberwithin{lemma}{section}
\numberwithin{remark}{section}
\numberwithin{example}{section}
\numberwithin{algorithm}{section}

%
%
%
\begin{document}

\title{Optimal Solution of Nonlinear Fuzzy Optimization Problem under Linear Order Relation}

\titlerunning{Optimal Solutions of Nonlinear Fuzzy Optimization Problems}        

\author{Umme Salma M. Pirzada }

\authorrunning{U. M. Pirzada} 

\institute{Umme Salma M. Pirzada \at
              School of Science and Engineering, Navrachana University, Vaodadara-391410, India. \\
	      \email{salmapirzada@yahoo.com}  
}

\date{}

\maketitle

\begin{abstract}
Multi-variable nonlinear fuzzy optimization problem is considered under linear order relation on fuzzy numbers. Using gH-differentiability of a fuzzy-valued function $\tilde{f}$, new necessary and sufficient optimality conditions are proposed. The optimality conditions are obtained without putting additional conditions on fuzzy-valued functions like, convexity, quasi-convexity, pseudo-convexity. Optimum solution of the fuzzy optimization problem is obtained based on the optimality conditions. Illustrations and a case study are given to explain the numerical applications of the proposed results.  Comparison of optimality conditions from existing literature is given.
\keywords{Fuzzy optimization problem \and Linear order relation \and Optimal solution}
\subclass{03E72\and 90C70}
\end{abstract}

\section{Introduction}
\label{intro}
\begin{par}
Fuzzy optimization accounts for any imprecision in the optimization problem. Bellman and Zadeh\cite{BE} introduced fuzzy 
optimization problems, they have stated that a fuzzy decision can be viewed as the intersection of fuzzy 
goals and problem constraints. There after a large number of solutions of linear and non-linear fuzzy optimization problems
are available in the literature. The optimality conditions for fuzzy optimization problems have been proposed by Wu\cite{HS4}\cite{HS5}
\& Pathak and Pirzada\cite{PA}. 
\end{par}
\begin{par}
Pathak and Pirzada\cite{PA}, have studied the necessary and sufficient optimality conditions to find the non-dominated 
solution of multi-variable fuzzy optimization problem. They have used the Hukuhara differentiability and partial order relation on 
fuzzy numbers.  Hukuhara differentiability is based on Hukuhara difference. Unfortunately, Hukuhara difference may not always 
exists, for instance, for fuzzy-valued function $\tilde{f}(x) = \tilde{a} \odot g(x)$, the conditions for existence of 
Hukuhara differentiability are :  the real-valued function $g$ must be defined on positive domain, that is,
 $g: (a,b) \to \mathbb{R}_{+}$, $(a,b) \subset \mathbb{R}_{+}$ with $g^{\prime}(x) > 0 $.  Moreover, if function $\tilde{f}$ 
is not H-differentiable then its level-functions are also not differentiable. 
\end{par}
\begin{par}
Stefanini\cite{ST1,ST2} generalized the Hukuhara difference, the generalized Hukuhara difference is usually termed as gH-difference.
Bede and Stefanini\cite{BE1} have defined Generalized Hukuhara differentiability ( gH-differentiability ) of 
fuzzy-valued functions based on gH-difference. The fuzzy-valued function $\tilde{f}: \mathbb{R} \to F(\mathbb{R})$ with 
$\tilde{f}(x) = \tilde{a} \odot g(x)$ where $g$ is a real-valued function with $g^{\prime}(x) < 0 $ is gH-differentiable 
though its lower and upper level functions are not differentiable. 
\end{par}
\begin{par}
The ranking of fuzzy numbers plays the important role in the study of fuzzy-optimization. Usually partial order relation
(fuzzy max order) is adopted to compare fuzzy numbers, unfortunately under fuzzy-max order relation, fuzzy numbers are not
always comparable. Hence we use alternative approach proposed by Goetschel and Voxman\cite{GO} called linear order relation on
on fuzzy numbers under which fuzzy numbers are comparable in the natural way.
\end{par}
\begin{par}
Here we consider unconstrained multi-variable fuzzy optimization problem with linear order relation defined on set of fuzzy numpers.
The concept of gH-differentiability of fuzzy-valued function over $\mathbb{R}$ have been used. The necessary and sufficient conditions
to find local and global minimizer of fuzzy-valued functions have been obtained and the optimal solution of non-linear fuzzy optimization
problems under a linear order relation on fuzzy numbers have been found.
\end{par}

\begin{par}
The paper is organized as follows: In section 2, we cite some basic definitions and results on fuzzy numbers, linear 
order relation on fuzzy numbers and gH-differentiability of a fuzzy-valued function over $\mathbb{R}$. The gH-differentiability 
of a fuzzy-valued function over $\mathbb{R}^{n}$ is defined in the same section. The local and global minimizer of an 
unconstrained multi-variable fuzzy optimization problem based on the optimality conditions have been found in Section 3. 
In Section 4, appropriate examples and case study study are given to illustrate the application of proposed results. Moreover, we correct 
the Example 1 proposed by Pathak and Pirzada\cite{PA}. The section 5 contains conclusion.
\end{par}

\section{Preliminaries}
We start with definition of fuzzy numbers.
\label{sec:2}
\begin{definition}\label{def1}\cite{GE} Let $\mathbb{R}$ be the set of real numbers and $\tilde{a}:\mathbb{R} \to[0,1]$ be a fuzzy set. The fuzzy set $\tilde{a}$ is called a fuzzy number if it satisfies the following properties:
\begin{description}
        \item[(i)] {$\tilde{a}$ is normal, that is, there exists $r_0 \in \mathbb{R} $ such that $\tilde{a}(r_0)=1$;}
        \item[(ii)] {$\tilde{a}$ is fuzzy convex, that is,  $\tilde{a}(t r + (1-t) s)\geq \min \{\tilde{a}(r),\tilde{a}(s)\}$, whenever r, s $\in
\mathbb{R}~~ and~~~t \in [0,1]$;}
        \item[(iii)] {$\tilde{a}(r)$ is upper semi-continuous on $\mathbb{R}$, that is, $\{r / \tilde{a}(r) \geq \alpha \}$ is a closed subset of $\mathbb{R}$ for each $\alpha \in (0,1]$;}
        \item[(iv)] {$cl\{r \in \mathbb{R} / \tilde{a}(r) >0\}$ forms a compact set,}
    \end{description}

where $cl$ denotes closure of a set. The set of all fuzzy numbers on $\mathbb{R}$ is denoted by $F(\mathbb{R})$. 
\end{definition}
The $\alpha$-level set for a fuzzy number is defined as follows:
\begin{definition}
For all $\alpha \in (0,1]$, $\alpha$-level set $\tilde{a}_{\alpha}$ of any $\tilde{a}\in F(\mathbb{R})$ is defined as 

\begin{eqnarray*}\tilde{a}_{\alpha} = \{ r \in \mathbb{R}/ \tilde{a}(r)\geq \alpha \}. \end{eqnarray*} 

The 0-level set $\tilde{a}_{0}$ is defined as the closure of the set $\{r \in \mathbb{R} / \tilde{a}(r) >0\}$. 
\end{definition}
\begin{remark}
By definition of fuzzy numbers, we can prove that, for any $\tilde{a}\in F(\mathbb{R})$ and for each $\alpha \in (0,1]$ , $\tilde{a}_{\alpha}$ is compact convex subset of $\mathbb{R}$, and we write $\tilde{a}_{\alpha}=[{\tilde{a}_{\alpha}}^{L},{\tilde{a}_{\alpha}}^{U}]$.
\end{remark}
\begin{remark}
The fuzzy number $\tilde{a}\in F(\mathbb{R})$ can be recovered from its $\alpha$-level sets by a well-known decomposition theorem (ref. \cite{GE1}), which states that $\tilde{a}= \cup_{\alpha \in [0,1]} \alpha \cdot \tilde{a}_{\alpha} $ where union on the right-hand side is the standard fuzzy union.
\end{remark}

\begin{definition}\label{def2}\cite{SO}  Addition and scalar multiplication in the set of fuzzy numbers $F(\mathbb{R})$ by their $\alpha$-level sets, are defined as follows:
\begin{eqnarray*}
(\tilde{a}\oplus \tilde{b})_{\alpha} & = &
[\tilde{a}_{\alpha}^{L}+\tilde{b}_{\alpha}^{L}, \tilde{a}_{\alpha}^{U}+\tilde{b}_{\alpha}^{U}] \\
(\lambda \odot \tilde{a})_{\alpha} & = &
[\lambda\cdot\tilde{a}_{\alpha}^{L},\lambda\cdot\tilde{a}_{\alpha}^{U}],~if~\lambda \geq 0 \\
			           & = &
[\lambda\cdot\tilde{a}_{\alpha}^{U},\lambda\cdot\tilde{a}_{\alpha}^{L}],~if~\lambda < 0
\end{eqnarray*}
where $\tilde{a}, \tilde{b} \in F(\mathbb{R})$, $\lambda \in \mathbb{R}$ and $\alpha \in [0,1]$.
\end{definition}

\begin{definition} \cite{SH} \label{def3}
A triangular fuzzy number $\tilde{a}$ is defined using three real numbers $a^{L},a$ and $a^{U}$ as
\[
{\tilde{a}}(r) = \left\{
\begin{array}{ll}
 {\frac {(r-a^{L})}{(a- a^{L})}}~~~ if ~~ a^{L} \leq r \leq a  \\
 {\frac {(a^{U}-r)}{(a^{U}-a)}}~~~ if ~~ a < r \leq a^{U} \\
 0~~~~~~~~~~~~~otherwise
\end{array} \right.
\]
and denoted by $\tilde{a} = (a^{L},a,a^{U})$. The $\alpha$-level set of $\tilde{a}$ is then 
\[
\tilde{a}_{\alpha} = [(1-\alpha)a^{L}+\alpha a, (1-\alpha)a^{U}+\alpha a], 
\] for $\alpha \in [0,1]$.
\end{definition} 

Goetschel and Voxman \cite{GO} defined an ordering, $\preceq$ for all $\tilde{a} \in F(\mathbb{R})$ such that $\tilde{a}_{\alpha}^{L}$ and $\tilde{a}_{\alpha}^{U}$ are Lebesgue integrable with respect to $\alpha \in [0,1]$.

\begin{definition}\label{def4}\cite{GO}
Suppose that $\tilde{a}, \tilde{b} \in F(\mathbb{R})$ with their $\alpha$-level sets $\tilde{a}_{\alpha} = [\tilde{a}_{\alpha}^{L}, \tilde{a}_{\alpha}^{U}]$ and $\tilde{b}_{\alpha} = [\tilde{b}_{\alpha}^{L}, \tilde{b}_{\alpha}^{U}]$, for $\alpha \in [0,1]$ such that $\tilde{a}_{\alpha}^{L}$ , $\tilde{a}_{\alpha}^{U}$, $\tilde{b}_{\alpha}^{L}$, $\tilde{b}_{\alpha}^{U}$  are Lebesgue integrable with respect to $\alpha$. Then $\tilde{a} \preceq \tilde{b}$ if
\begin{eqnarray*}
\int_{0}^{1} {\alpha [ \tilde{a}_{\alpha}^{L}+\tilde{a}_{\alpha}^{U}]}d\alpha \leq \int_{0}^{1} {\alpha [ \tilde{b}_{\alpha}^{L}+\tilde{b}_{\alpha}^{U}]}d\alpha
\end{eqnarray*}
\end{definition}

\begin{remark}
The ordering $\preceq$ is reflexive and transitive; moreover,any two elements of $F(\mathbb{R})$ are comparable under the ordering $\preceq$, i.e., $\preceq$ is a linear ordering for $F(\mathbb{R})$, (refer \cite{GO}).
\end{remark}

Based on the Definition \ref{def4}, we say that $\tilde{a} \prec \tilde{b}$ if

\begin{eqnarray*}
\int_{0}^{1} {\alpha [ \tilde{a}_{\alpha}^{L}+\tilde{a}_{\alpha}^{U}]}d\alpha < \int_{0}^{1} {\alpha [ \tilde{b}_{\alpha}^{L}+\tilde{b}_{\alpha}^{U}]}d\alpha
\end{eqnarray*}
Now we state the definition of Hausdorff metric on set of fuzzy numbers.

\begin{definition}\label{def5}\cite{PI}
Let $A, B \subseteq \mathbb{R}^{n}$. The Hausdorff metric $d_H $ is defined by
\begin{eqnarray*}
d_H(A,B) \mathrel{\mathop:}= \max\{\sup_{x \in A}\inf_{y \in B}\|x-y\|, \sup_{y \in B}\inf_{x \in A}\|x-y\|\}. \end{eqnarray*}
Then the metric $d_{F}$ on $F(\mathbb{R})$ is defined as 
\begin{eqnarray*}
d_{F}(\tilde{a}, \tilde{b})  \mathrel{\mathop:}=  \sup_{0 \leq \alpha \leq 1}\{d_H({\tilde{a}}_{\alpha},{\tilde{b}}_{\alpha})\},
\end{eqnarray*}
for all $\tilde{a}, \tilde{b} \in F(\mathbb{R})$. Since $\tilde{a}_{\alpha}$ and $\tilde{b}_{\alpha}$ are compact intervals in $\mathbb{R}$,
\begin{eqnarray*}
d_{F}(\tilde{a}, \tilde{b})  =  \sup_{0 \leq \alpha \leq 1} \max\{
|{{\tilde{a}}_{\alpha}}^{L}-{{\tilde{b}}_{\alpha}}^{L}|,
|{{\tilde{a}}_{\alpha}}^{U}-{{\tilde{b}}_{\alpha}}^{U}|\}.
\end{eqnarray*}
\end{definition}

\begin{definition}\cite{HS1}  \label{def6}
Let $V$ be a real vector space and $F(\mathbb{R})$ be a set of fuzzy numbers. Then a function $\tilde{f}: V \to F(\mathbb{R})$ is called fuzzy-valued function defined on $V$. For each $x \in V$, $\tilde{f}(x)$ is a fuzzy number. Corresponding to this function $\tilde{f}$, for $\alpha \in [0,1]$, we have two real-valued functions $\tilde{f}_{\alpha}^L$ and $\tilde{f }_{\alpha}^U$ on $V$ as ${\tilde{f}_{\alpha}}^{L}(x) = (\tilde{f}(x))_{\alpha}^{L}$ and ${\tilde{f}_{\alpha}}^{U}(x) = (\tilde{f}(x))_{\alpha}^{U}$ for all $x \in V$. These functions $\tilde{f}_{\alpha}^L(x)$ and $\tilde{f }_{\alpha}^U(x)$ are called $\alpha$-level functions of the fuzzy-valued function $\tilde{f}$.
\end{definition}
The continuity of a fuzzy-valued function can be stated form \cite{DI} as 

\begin{definition}\label{def7}\cite{DI} Let $\tilde{f}: \mathbb{R}^{n}\to F(\mathbb{R}) $ be a fuzzy-valued
function. We say that $\tilde{f}$ is continuous at
$c \in \mathbb{R}^{n}$ if and only if for every $\epsilon >0$, there exists a
$\delta = \delta(c,\epsilon) >0$ such that 
\[d_{F}(\tilde{f}(x), \tilde{f}(c))< \epsilon\]
for all $x\in \mathbb{R}^{n}$ with $\|x-c\| < \delta$. That is,
\begin{eqnarray*}
 {\lim}_{x\rightarrow c}{\tilde{f}(x) \mathrel{\mathop:}= \tilde{f}(c)}.
\end{eqnarray*}
\end{definition}
\begin{example}
Let $\tilde{f}: X \to F(\mathbb{R}$, $X$ is an open subset of $\mathbb{R}_{+}$, be a fuzzy-valued function defined by $\tilde{f}(x) = \tilde{a} \odot x$, $x \in X$ and $\tilde{a}$ is a fuzzy number. The $\tilde{f}$ is a continuous fuzzy-valued function since, we find an $\epsilon >0$, for a $\delta > 0$ and $c \in X$ such that $|x-c| < \delta$ implies
\begin{eqnarray*}
d_{F}(\tilde{f}(x), \tilde{f}(c)) 
& = & \sup_{0 \leq \alpha \leq 1} \max\{ |{{\tilde{a}}_{\alpha}}^{L}x -{{\tilde{a}}_{\alpha}}^{L}c|, |{{\tilde{a}}_{\alpha}}^{U}x-{{\tilde{a}}_{\alpha}}^{U}c|\}  \\
& = & |x - c| \sup_{0 \leq \alpha \leq 1} \max\{ |{{\tilde{a}}_{\alpha}}^{L}|, |{{\tilde{a}}_{\alpha}}^{U}|\} \\
& < & \delta d_{F}(\tilde{a}, \tilde{0}) = \epsilon,
\end{eqnarray*}
where $\epsilon = \delta {{\tilde{a}}_{0}}^{U}$. Therefore, $\tilde{f}$ is continuous function on $X$ subset of $\mathbb{R}_{+}$.
\end{example}

Generalized Hukuhara difference (gH-difference) is defined as follows:
\begin{definition}\cite{ST1,ST2}\label{def8}
Given two fuzzy numbers $\tilde{a}, \tilde{b} \in F(\mathbb{R})$, the gH-difference is the fuzzy number $\tilde{c}$, if exists, such that 
\begin{equation}
\tilde{a} \ominus_{gH} \tilde{b} = \tilde{c} \Leftrightarrow \left\{
\begin{array}{ll}
 ~~~(i) ~\tilde{a} = \tilde{b} + \tilde{c}, \\
 or~ (ii)~ \tilde{b} = \tilde{a} - \tilde{c}. 
\end{array} \right.
\end{equation}
\end{definition}
In terms of $\alpha$-level sets, $[\tilde{a} \ominus_{gH} \tilde{b}]_{\alpha} = [\min\{ \tilde{a}_{\alpha}^{L} - \tilde{a}_{\alpha}^{L}, \tilde{a}_{\alpha}^{U} - \tilde{a}_{\alpha}^{U}\}, \max\{ \tilde{a}_{\alpha}^{L} - \tilde{a}_{\alpha}^{L}, \tilde{a}_{\alpha}^{U} - \tilde{a}_{\alpha}^{U} \}]$. 
\begin{par}
The generalized Hukuhara differentiability (gH-differentiability ) of the fuzzy-valued function based on gH-difference is defined as:
\end{par}
\begin{definition}\cite{BE1}\label{def9}
Let $x_{0} \in X$, $X$ is an open interval and $h$ be such that $x_{0} + h \in X$, then gH-derivative of a function $\tilde{f}: X \to F(\mathbb{R})$ at $x_{0}$ is defined as
\begin{equation}\label{eq}
\tilde{f}^{\prime}_{gH}(x_{0}) = \lim_{h \to 0} {{{1} \over {h}} \tilde{f}(x_{0} + h ) \ominus_{gH} \tilde{f}(x_{0}). }                                      
\end{equation}
If $\tilde{f}^{\prime}_{gH}(x_{0}) \in F(\mathbb{R})$ satisfying (\ref{eq}) exists, we say that $\tilde{f}$ is generalized Hukuhara differentiable ( gH-differentiable ) at $x_{0}$. If $\tilde{f}$ is gH-differentiable at each $x \in X $ with gH-derivative $\tilde{f}^{\prime}_{gH}(x) \in F(\mathbb{R})$, we say that $\tilde{f}$ is gH-differentiable on $X$. Moreover, we can define continuously gH-differentiable fuzzy-valued function using continuity of gH-differentiable fuzzy-valued function. 
\end{definition}
\begin{example}
The gH-differentiable fuzzy-valued functions are listed in \textbf{Table 2}.
\end{example}
The partial gH-derivative of a fuzzy-valued function $\tilde{f}$ on $\mathbb{R}^n$ can be defined in a same manner as the partial H-derivative of a fuzzy-valued function defined in \cite{PI}. 
\begin{definition} \label{def10}
Let $\tilde{f}$ be a fuzzy-valued function defined on an open subset $X$ of $\mathbb{R}^n$ and 
let $\bar{x}^{0} = (x^{0}_{1},...,x^{0}_{n}) \in X$ be fixed. \\ We say that $\tilde{f}$ has the 
$i^{th}$ partial gH-derivative $D_{i}\tilde{f}(\bar{x}^{0})$ at $\bar{x}^{0}$ if and only if the fuzzy-valued function 
$\tilde{g}(\bar{x}_{i}) =\tilde{f}(x^{0}_{1},..,x^{0}_{i-1},x_{i},x^{0}_{i+1},..,x^{0}_{n})$ is gH-differentiable at $x^{0}_{i}$ with gH-derivative $D_{i}\tilde{f}(\bar{x}^{0})$. We also write 
$D_{i}\tilde{f}(\bar{x}^{0})$ as $({\partial\tilde{f}}/ {\partial x_{i}})(\bar{x}^{0})$.
\end{definition}
We define gH-differentiability of the fuzzy-valued function $\tilde{f}$ on $\mathbb{R}^n$ as follows:
\begin{definition}\label{def11}
We say that $\tilde{f}$ is gH-differentiable at $\bar{x}^{0}$ if and only if one of the partial gH-derivatives ${\partial\tilde{f}}/ {\partial x_{1}},..., {\partial\tilde{f}}/ {\partial x_{n}}$ exists at $\bar{x}^{0}$ and the remaining $n-1$ partial gH-derivatives exist on some neighborhood of $\bar{x}^{0}$ and are continuous at $\bar{x}^{0}$ (in the sense of fuzzy-valued function).
\end{definition}

The gradient of $\tilde{f}$ at $\bar{x}^{0}$ is denoted by 
\[
\nabla \tilde{f}(\bar{x}^{0}) = (D_{1}\tilde{f}(\bar{x}^{0}),..., D_{n}\tilde{f}(\bar{x}^{0})),
\]
 and it defines a fuzzy-valued function from $X$ to $F^{n}(\mathbb{R})=  F(\mathbb{R}) \times ....\times F(\mathbb{R})$ ($n$ times), where each $D_{i}\tilde{f}(\bar{x}^{0})$ is a fuzzy number for $i = 1,...,n$. We say that $\tilde{f}$ is gH-differentiable on $X$ if it is gH-differentiable at every $\bar{x}^{0} \in X$. 

\begin{definition}\label{def12}
We say that $\tilde{f}$ is continuously gH-differentiable at $\bar{x}^{0}$ if and only if all of the partial gH-derivatives ${\partial\tilde{f}}/ {\partial x_{i}}$, $i = 1,..,n$, exist on some neighborhood of $\bar{x}^{0}$ and are continuous at $\bar{x}^{0}$ (in the sense of fuzzy-valued function).\\
We say that $\tilde{f}$ is continuously gH-differentiable on $X$ if it is continuously gH-differentiable at every $\bar{x}^{0} \in X$. 
\end{definition}

\begin{definition}\label{def13}
Let $\tilde{f}:X \to F(\mathbb{R}), X \subset \mathbb{R}^{n}$ be a fuzzy-valued function. Suppose now that there is $\bar{x}^{0} \in X$ such that gradient of $\tilde{f}$, $\nabla \tilde{f}$,  is itself gH-differentiable at $\bar{x}^{0}$, that is, for each i, the function $D_{i}\tilde{f}: X \to F(\mathbb{R})$ is gH-differentiable at $\bar{x}^{0}$. Denote the gH-partial derivative of $D_{i}\tilde{f}$ in the direction of $\bar{e}_{j}$ at $\bar{x}^{0}$ by 
\[
D^{2}_{ij}\tilde{f}~~or~~\frac {\partial^{2}\tilde{f}(\bar{x}^{0})} {\partial x_{i} \partial x_{j}},~~if~~i \neq j,
\] and 
\[
D^{2}_{ii}\tilde{f}~~or~~\frac {\partial^{2}\tilde{f}(\bar{x}^{0})}{\partial x_{i}^{2}},~~if~~i = j.
\] 
Then we say that $\tilde{f}$ is twice gH-differentiable at $\bar{x}^{0}$, with second gH-derivative $\nabla ^{2} \tilde{f}(\bar{x}^{0})$ which is denoted by
\[ \nabla ^{2} \tilde{f}(\bar{x}^{0}) = \left(\begin{array}{ccc}
\frac{\partial ^{2}\tilde{f}(\bar{x}^{0})}{\partial x^{2}_{1}}& ...& \frac {\partial ^{2}\tilde{f}(\bar{x}^{0})} {\partial x_{1} \partial x_{n}} \\
... & ... &...\\
\frac {\partial ^{2}\tilde{f}(\bar{x}^{0})}{\partial x_{n} \partial x_{1}}& ...&\frac {\partial ^{2}\tilde{f}(\bar{x}^{0})}{\partial x^{2}_{n}} \\
\end{array}
\right)
\]
where $ \frac {\partial ^{2}\tilde{f}(\bar{x}^{0})} {\partial x_{i} \partial x_{j}} \in F(\mathbb{R})$, $i,j = 1,...,n$. \\
If $\tilde{f}$ is twice gH-differentiable at each $\bar{x}^{0}$ in $X$, we say that $\tilde{f}$ is twice gH-differentiable on $X$, and if for each $i, j = 1,...,n$, the cross-partial derivative $ \frac {\partial ^2 \tilde{f}}{\partial x_{i} \partial x_{j}}$ is continuous function from $X$ to $F(\mathbb{R})$, we say that $\tilde{f}$ is twice continuously gH-differentiable on $X$.
\end{definition}
\section{Optimal solution}

We consider an unconstrained multi-variable fuzzy optimization problem (UMFOP):
\[
Minimize~~~\tilde{f}(\bar{x}), ~~~~ \bar{x} \in X
\]
where $X \subseteq \mathbb{R}^{n}$ is an open set and $\tilde{f}: X \to F(\mathbb{R})$ is a fuzzy-valued function. \\

Using linear order relation defined in Section \ref{sec:2}, we can define optimum solution of (UMFOP) as follows:

\begin{definition} \label{defm}
Let $X \subseteq \mathbb{R}^{n}$ be an open set and Let $\tilde{f}: X \to F(\mathbb{R})$ be a fuzzy-valued function. 
\begin{itemize}
\item [1.] A point $\bar{x}^{*} \in X$ is said to be a local minimum or minimizer, if there exists a $\epsilon > 0$ such that $\tilde{f}(\bar{x}^{*})\preceq \tilde{f}(x)$, for all $\bar{x} \in N_{\epsilon}(\bar{x}^{*})$.
\item [2.] $\bar{x}^{*} \in X$ is said to be a global minimum or minimizer, if $\tilde{f}(\bar{x}^{*})\preceq \tilde{f}(\bar{x})$, for all $\bar{x} \in X$.
\item [3.] A point $\bar{x}^{*} \in X$ is said to be a strict local minimum or minimizer, if there exists a $\epsilon > 0$ such that $\tilde{f}(\bar{x}^{*})\prec \tilde{f}(\bar{x})$, for all $\bar{x} \in N_{\epsilon}(x^{*})$.
\item [4.] $\bar{x}^{*} \in X$ is said to be a strict global minimum or minimizer, if $\tilde{f}(\bar{x}^{*})\prec \tilde{f}(\bar{x})$, for all $\bar{x} \in X$.
\end{itemize}
\end{definition}
In the above, $\bar{x}^{*}$ is a local (global) maximizer if "$\preceq$" is replaced by "$\succeq$". 
To find an optimal solution of (UMFOP), we propose the first-order necessary condition: 

\begin{theorem} \label{thm1}
Suppose $\tilde{f}: X \to F(\mathbb{R})$ is gH-differentiable fuzzy-valued function, $X$ is an open subset of $\mathbb{R}^{n}$. If $\bar{x}^{*} \in X $ is a local minimizer of (UMFOP), then 
\[
\int_{0}^{1}\alpha [ \nabla ( \tilde{f}_{\alpha}^{L}+ \tilde{f}_{\alpha}^{U} )(\bar{x}^{*})] d\alpha  = 0.
\]
\end{theorem}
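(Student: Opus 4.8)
The plan is to scalarize the fuzzy objective through the Goetschel--Voxman ranking and thereby reduce the statement to the classical first-order condition for a real-valued function. Define
\[
R(\bar{x}) := \int_{0}^{1} \alpha\left[\tilde{f}_{\alpha}^{L}(\bar{x}) + \tilde{f}_{\alpha}^{U}(\bar{x})\right] d\alpha, \qquad \bar{x} \in X.
\]
By Definition~\ref{def4}, for any two points $\bar{x},\bar{y} \in X$ we have $\tilde{f}(\bar{x}) \preceq \tilde{f}(\bar{y})$ if and only if $R(\bar{x}) \le R(\bar{y})$. Hence the hypothesis that $\bar{x}^{*}$ is a local minimizer of (UMFOP) --- meaning $\tilde{f}(\bar{x}^{*}) \preceq \tilde{f}(\bar{x})$ for all $\bar{x} \in N_{\epsilon}(\bar{x}^{*})$ --- is precisely the statement that the real-valued function $R$ attains a local minimum at the interior point $\bar{x}^{*}$.

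First I would establish that $R$ is partially differentiable at $\bar{x}^{*}$ and that its partial derivatives are obtained by differentiating under the integral sign,
\[
\frac{\partial R}{\partial x_{i}}(\bar{x}^{*}) = \int_{0}^{1} \alpha\,\frac{\partial}{\partial x_{i}}\!\left(\tilde{f}_{\alpha}^{L} + \tilde{f}_{\alpha}^{U}\right)(\bar{x}^{*})\, d\alpha, \qquad i = 1,\dots,n.
\]
The gH-differentiability of $\tilde{f}$ enters exactly here: although either level function $\tilde{f}_{\alpha}^{L}$ or $\tilde{f}_{\alpha}^{U}$ may individually fail to be differentiable in $\bar{x}$ (as the excerpt notes for $\tilde{f}(x)=\tilde{a}\odot g(x)$ with $g^{\prime}<0$), the gH-derivative controls the \emph{sum} $\tilde{f}_{\alpha}^{L} + \tilde{f}_{\alpha}^{U}$, which stays differentiable because any interchange of the $\min$/$\max$ endpoints affects the two level functions symmetrically and cancels in the sum. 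Together with the Lebesgue integrability of the level functions (already required for Definition~\ref{def4} to be meaningful), a standard Leibniz / dominated-convergence argument then legitimizes the interchange.

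Next I would run the classical Fermat argument coordinate by coordinate. Fixing $i$ and considering the one-variable map $h \mapsto R(\bar{x}^{*} + h\bar{e}_{i})$, interiority makes $h=0$ a local minimum; forming the one-sided difference quotients and letting $h \to 0^{+}$ and $h \to 0^{-}$ forces
\[
\frac{\partial R}{\partial x_{i}}(\bar{x}^{*}) = 0, \qquad i = 1,\dots,n,
\]
so that $\nabla R(\bar{x}^{*})=0$. Combining this with the differentiation-under-the-integral formula and assembling the $n$ components into the gradient vector yields
\[
\int_{0}^{1} \alpha\left[\nabla\!\left(\tilde{f}_{\alpha}^{L} + \tilde{f}_{\alpha}^{U}\right)(\bar{x}^{*})\right] d\alpha = 0,
\]
which is the assertion.

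I expect the genuine obstacle to be the justification of differentiation under the integral sign, and in particular pinning down why the summed level function is differentiable in $\bar{x}$ under mere gH-differentiability of $\tilde{f}$; this is the step that distinguishes the present linear-order, gH-differentiable framework from the Hukuhara-differentiable treatment of Pathak and Pirzada, and it is where the parametric regularity of the level functions (uniformity in $\alpha$, needed to pass the $\alpha$-integral through the $\bar{x}$-limit) must be verified with care. The reduction to Fermat's theorem and the coordinate-wise one-sided limit argument are otherwise routine.
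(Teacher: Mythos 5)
Your proposal is correct, but there is essentially nothing in the paper to compare it against: the paper's entire proof of Theorem~\ref{thm1} is the single sentence ``The proof is followed by hypothesis of the Theorem,'' so the reduction you spell out is exactly the argument the author leaves implicit. Your scalarization $R(\bar{x}) = \int_{0}^{1}\alpha[\tilde{f}_{\alpha}^{L}(\bar{x})+\tilde{f}_{\alpha}^{U}(\bar{x})]\,d\alpha$ is the right move: Definition~\ref{def4} says $\preceq$ is precisely the order of $R$-values, so a local minimizer of (UMFOP) is an interior local minimizer of the real-valued function $R$, and Fermat's rule gives $\nabla R(\bar{x}^{*})=0$. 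The step you flag as the genuine obstacle --- differentiability of $\tilde{f}_{\alpha}^{L}+\tilde{f}_{\alpha}^{U}$ and passing $\nabla$ through the $\alpha$-integral --- does close cleanly, and it is worth recording how: since $\min\{a,b\}+\max\{a,b\}=a+b$, the sum of the two endpoints of the $\alpha$-level of the gH-difference quotient $\frac{1}{h}\bigl(\tilde{f}(\bar{x}^{*}+h\bar{e}_{i})\ominus_{gH}\tilde{f}(\bar{x}^{*})\bigr)$ is exactly the ordinary difference quotient of $\tilde{f}_{\alpha}^{L}+\tilde{f}_{\alpha}^{U}$ (for either sign of $h$, the min/max swap cancels in the sum, as you observed); and because the partial gH-derivative is a limit in the metric $d_{F}$ of Definition~\ref{def5}, which is a supremum over $\alpha$, this difference quotient converges \emph{uniformly} in $\alpha$ to $(D_{i}\tilde{f}(\bar{x}^{*}))_{\alpha}^{L}+(D_{i}\tilde{f}(\bar{x}^{*}))_{\alpha}^{U}$. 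Uniform convergence on $[0,1]$ licenses the interchange of limit and integral with no extra domination hypothesis, so $\partial R/\partial x_{i}$ exists at $\bar{x}^{*}$ and equals $\int_{0}^{1}\alpha\,\partial(\tilde{f}_{\alpha}^{L}+\tilde{f}_{\alpha}^{U})/\partial x_{i}(\bar{x}^{*})\,d\alpha$, after which your coordinate-wise Fermat argument finishes. In short: your proof is sound, supplies precisely the justification the paper omits, and the only refinement I would suggest is to replace the appeal to a Leibniz/dominated-convergence argument by the uniformity in $\alpha$ that is already built into the $d_{F}$-limit defining the gH-derivative.
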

\begin{proof}
The proof is followed by hypothesis of the Theorem.
\end{proof}

\begin{definition}
$x^{*} \in X$ satisfying the property of the above theorem is called a critical point.
\end{definition}
The sufficient conditions show whether these points are minimizers/maximizers:

\begin{theorem}\label{thm2}
Let $\tilde{f}$ be a twice continuously gH-differentiable fuzzy-valued function defined on $X \subseteq \mathbb{R}^{n}$. If $\bar{x}^{*}$ is a local minimizer of (UMFOP) then 
\[
\int_{0}^{1}\alpha [ \nabla^{2} ( \tilde{f}_{\alpha}^{L}+ \tilde{f}_{\alpha}^{U} )(\bar{x}^{*})] d\alpha  
\]
is a positive semi-definite matrix.
\end{theorem}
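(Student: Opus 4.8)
The plan is to reduce the fuzzy problem to a single real-valued optimization problem via the ranking functional that defines the order $\preceq$. Associate to $\tilde f$ the real-valued function
\[
F(\bar x)=\int_{0}^{1}\alpha\big[\tilde f_{\alpha}^{L}(\bar x)+\tilde f_{\alpha}^{U}(\bar x)\big]\,d\alpha,\qquad \bar x\in X.
\]
By Definition \ref{def4}, for any two points $\bar x,\bar y\in X$ we have $\tilde f(\bar x)\preceq\tilde f(\bar y)$ if and only if $F(\bar x)\le F(\bar y)$. Hence, by Definition \ref{defm}, $\bar x^{*}$ is a local minimizer of (UMFOP) with respect to $\preceq$ if and only if $\bar x^{*}$ is an ordinary local minimizer of the scalar function $F$ on a neighbourhood $N_{\epsilon}(\bar x^{*})$. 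This is the same reduction that underlies Theorem \ref{thm1}, whose conclusion reads $\nabla F(\bar x^{*})=0$.

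First I would establish that $F$ is twice continuously differentiable and that its Hessian may be computed by differentiating under the integral sign, so that
\[
\nabla^{2}F(\bar x)=\int_{0}^{1}\alpha\big[\nabla^{2}(\tilde f_{\alpha}^{L}+\tilde f_{\alpha}^{U})(\bar x)\big]\,d\alpha.
\]
The conceptual point here is that although gH-differentiability may leave the individual level functions $\tilde f_{\alpha}^{L}$ and $\tilde f_{\alpha}^{U}$ non-differentiable (the two branches of the gH-difference interchange the roles of the lower and upper endpoints), their sum $\tilde f_{\alpha}^{L}+\tilde f_{\alpha}^{U}$ is invariant under that interchange and is therefore genuinely smooth in $\bar x$. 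Since $\tilde f$ is twice continuously gH-differentiable (Definition \ref{def13}), for every $i,j$ the endpoints of the component $\partial^{2}\tilde f/\partial x_{i}\partial x_{j}$ are continuous in $(\alpha,\bar x)$, which yields both the twice continuous differentiability of $\bar x\mapsto \tilde f_{\alpha}^{L}(\bar x)+\tilde f_{\alpha}^{U}(\bar x)$ and the joint continuity in $(\alpha,\bar x)$ needed to justify the Leibniz rule on the bounded interval $[0,1]$.

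Granting the identity for $\nabla^{2}F$, the result is immediate from the classical second-order necessary optimality condition: since $\bar x^{*}$ is a local minimizer of the twice continuously differentiable real-valued function $F$, its Hessian $\nabla^{2}F(\bar x^{*})$ is positive semi-definite, which is exactly the asserted conclusion. The main obstacle is the middle step rather than the final one. I would need to argue carefully that the sum of level functions is $C^{2}$ even at points where the individual level functions have kinks, and then supply a dominating bound (uniform in $\alpha\in[0,1]$ over a compact neighbourhood of $\bar x^{*}$) for the second-order partials so that differentiation and the integral $\int_{0}^{1}\alpha(\cdot)\,d\alpha$ may legitimately be interchanged; the continuity of the second gH-derivative furnished by Definition \ref{def13} is precisely what supplies this uniform control.
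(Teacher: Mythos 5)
Your proposal is correct and follows exactly the route the paper intends: the linear order $\preceq$ is defined by the ranking functional, so a local minimizer of (UMFOP) is precisely a local minimizer of the scalar function $F(\bar x)=\int_{0}^{1}\alpha[\tilde f_{\alpha}^{L}(\bar x)+\tilde f_{\alpha}^{U}(\bar x)]\,d\alpha$, and the conclusion is the classical second-order necessary condition for $F$. The paper itself gives no written proof (it only remarks that the argument is ``straightforward'' and follows the same line as Pathak--Pirzada \cite{PA2}), so your version --- in particular the observation that the sum $\tilde f_{\alpha}^{L}+\tilde f_{\alpha}^{U}$ stays smooth under gH-differentiability because $\min\{p,q\}+\max\{p,q\}=p+q$, and the justification of differentiation under the integral sign --- supplies precisely the details the paper omits.
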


Now, we propose a second-order sufficient condition.

\begin{theorem}\label{thm3}
Let $\tilde{f}$ be a twice continuously gH-differentiable function on $X \subseteq \mathbb{R}^{n}$. Suppose that
\begin{description}
\item [1.] 
\[
\int_{0}^{1}\alpha [ \nabla ( \tilde{f}_{\alpha}^{L}+ \tilde{f}_{\alpha}^{U} )(\bar{x}^{*})] d\alpha  = 0.
\]
\item [2.]
\[
\int_{0}^{1}\alpha [ \nabla^{2} ( \tilde{f}_{\alpha}^{L}+ \tilde{f}_{\alpha}^{U} )(\bar{x}^{*})] d\alpha  
\] is positive definite matrix.
\end{description}
Then, $\bar{x}^{*}$ is a strict local minimizer of (UMFOP).
\end{theorem}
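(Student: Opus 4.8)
The plan is to reduce the fuzzy problem to an ordinary real-valued optimization problem by means of the Goetschel--Voxman ranking functional, and then to invoke the classical second-order sufficient condition. First I would introduce the real-valued function
\[
F(\bar{x}) := \int_{0}^{1} \alpha\,[\tilde{f}_{\alpha}^{L}(\bar{x}) + \tilde{f}_{\alpha}^{U}(\bar{x})]\,d\alpha, \qquad \bar{x} \in X .
\]
By the definition of the linear order relation (Definition \ref{def4}), we have $\tilde{f}(\bar{x}^{*}) \preceq \tilde{f}(\bar{x})$ if and only if $F(\bar{x}^{*}) \leq F(\bar{x})$, and $\tilde{f}(\bar{x}^{*}) \prec \tilde{f}(\bar{x})$ if and only if $F(\bar{x}^{*}) < F(\bar{x})$. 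Consequently, by Definition \ref{defm}, $\bar{x}^{*}$ is a strict local minimizer of (UMFOP) exactly when $\bar{x}^{*}$ is a strict local minimizer of the ordinary function $F$. This reduction turns the fuzzy statement into a standard fact about $F$.

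Second, I would establish that $F$ is twice continuously differentiable with
\[
\nabla F(\bar{x}) = \int_{0}^{1} \alpha\, \nabla(\tilde{f}_{\alpha}^{L} + \tilde{f}_{\alpha}^{U})(\bar{x})\, d\alpha, \qquad \nabla^{2} F(\bar{x}) = \int_{0}^{1} \alpha\, \nabla^{2}(\tilde{f}_{\alpha}^{L} + \tilde{f}_{\alpha}^{U})(\bar{x})\, d\alpha .
\]
The twice-continuous gH-differentiability of $\tilde{f}$ guarantees that, for each $\alpha$, the \emph{endpoint sum} $\bar{x} \mapsto \tilde{f}_{\alpha}^{L}(\bar{x}) + \tilde{f}_{\alpha}^{U}(\bar{x})$ is itself twice continuously differentiable, since this sum is insensitive to the $\min/\max$ interchange inherent in the gH-difference. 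Together with joint continuity of the integrand in $(\alpha,\bar{x})$ over the compact range $\alpha\in[0,1]$, this licenses differentiation under the integral sign and yields the two displayed formulas. Under these identities, hypotheses (1) and (2) of the theorem read precisely $\nabla F(\bar{x}^{*}) = 0$ and $\nabla^{2} F(\bar{x}^{*})$ positive definite.

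Third, I would run the standard Taylor argument. By Taylor's theorem with Lagrange remainder, for $\bar{x}^{*}+\bar{h}$ near $\bar{x}^{*}$ there is $t \in (0,1)$ with
\[
F(\bar{x}^{*} + \bar{h}) = F(\bar{x}^{*}) + \nabla F(\bar{x}^{*})^{\top} \bar{h} + \tfrac{1}{2}\, \bar{h}^{\top} \nabla^{2} F(\bar{x}^{*} + t\bar{h})\, \bar{h}.
\]
Using $\nabla F(\bar{x}^{*}) = 0$ and the continuity of $\nabla^{2}F$, positive definiteness at $\bar{x}^{*}$ persists on a neighborhood $N_{\epsilon}(\bar{x}^{*})$; hence the quadratic remainder is strictly positive for every nonzero $\bar{h}$ with $\bar{x}^{*}+\bar{h} \in N_{\epsilon}(\bar{x}^{*})$. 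Therefore $F(\bar{x}^{*}+\bar{h}) > F(\bar{x}^{*})$, so $\bar{x}^{*}$ is a strict local minimizer of $F$ and, by the first step, a strict local minimizer of (UMFOP).

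The hard part will be the second step: rigorously justifying that $F$ inherits twice-continuous differentiability from the twice-continuous gH-differentiability of $\tilde{f}$, and that its gradient and Hessian may be obtained by differentiating under the integral sign. This is precisely where the relationship between the gH-derivatives of $\tilde{f}$ and the ordinary derivatives of the endpoint sum $\tilde{f}_{\alpha}^{L} + \tilde{f}_{\alpha}^{U}$ must be made precise, supplemented by a uniform-continuity (or dominated-convergence) argument on $[0,1]$ to validate the interchange of $\nabla$, $\nabla^{2}$ with $\int_{0}^{1}$. Once this interchange is secured, the remainder of the proof is the classical finite-dimensional calculus argument.
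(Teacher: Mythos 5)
Your plan is correct, but be aware that there is nothing in the paper to compare it against: Theorem \ref{thm3} is given no proof at all. The paper only appends a remark saying the proofs of the sufficient conditions ``are straight forward and implemented in a same line as done by Pathak and Pirzada \cite{PA2}'' (and the proof of Theorem \ref{thm1} is the single sentence ``The proof is followed by hypothesis of the Theorem''). Your proposal therefore supplies the argument the paper omits, and it is the natural one: since Definition \ref{def4} \emph{defines} $\preceq$ and $\prec$ as exactly the order induced by the scalar functional $F(\bar{x})=\int_{0}^{1}\alpha[\tilde{f}_{\alpha}^{L}(\bar{x})+\tilde{f}_{\alpha}^{U}(\bar{x})]\,d\alpha$, Definition \ref{defm} makes ``strict local minimizer of (UMFOP)'' literally equivalent to ``strict local minimizer of $F$,'' and the rest is classical second-order sufficiency for $F$. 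You also correctly isolate the real mathematical content, which the paper never addresses: gH-differentiability does \emph{not} make $\tilde{f}_{\alpha}^{L}$ and $\tilde{f}_{\alpha}^{U}$ individually differentiable (the paper itself emphasizes this in the introduction), but the endpoint \emph{sum} is immune to the $\min/\max$ swap in the gH-difference, so the difference quotient of $\tilde{f}_{\alpha}^{L}+\tilde{f}_{\alpha}^{U}$ is the endpoint sum of the fuzzy difference quotient, and $d_{F}$-convergence (a supremum over $\alpha$) forces this to converge uniformly in $\alpha$; that uniformity is precisely what legitimates passing $\nabla$ and $\nabla^{2}$ through $\int_{0}^{1}$, so that hypotheses (1) and (2) become $\nabla F(\bar{x}^{*})=0$ and $\nabla^{2}F(\bar{x}^{*})\succ 0$.

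One repair to your second step: do not invoke ``joint continuity of the integrand in $(\alpha,\bar{x})$.'' For a fuzzy number the maps $\alpha\mapsto\tilde{a}_{\alpha}^{L},\tilde{a}_{\alpha}^{U}$ are only monotone and bounded (hence measurable and integrable), not continuous in $\alpha$ in general, so joint continuity may fail. The correct justification is the alternative you already name: integrability in $\alpha$ from monotonicity and boundedness, uniform-in-$\alpha$ convergence of difference quotients from the $d_{F}$-limits in Definitions \ref{def9}--\ref{def13}, and continuity of $\nabla^{2}F$ from $d_{F}$-continuity of the second gH-partials, whose endpoint sums are then continuous uniformly in $\alpha$. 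With that substitution your argument is complete.
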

\begin{remark}
The proofs of sufficient optimality conditions are straight forward and implemented in a same line as done by Pathak and Pirzada \cite{PA2}. 
\end{remark}
\section{Numerical applications}
First we consider an example for single-variable fuzzy optimization problem.
\begin{example}\label{example1}
\[
Minimize ~~~\tilde{f}(x) = (\tilde{a} \odot x^{3}) \oplus (\tilde{b} \odot x^{2}), ~x \in \mathbb{R} 
\] 
where $\tilde{a} = (a^L ,a,a^U ) $ and $\tilde{b} = (b^L ,b, b^U ) $ are appropriate triangular fuzzy numbers. \\

By fuzzy arithmetic, we have $\alpha$-level functions of the fuzzy-valued objective function are \\

\[ 
\tilde{f}_{\alpha}^{L}(x) = \left\{
\begin{array}{ll}
 ((1-\alpha) a^L + a \alpha) x^{3} + ((1-\alpha) b^L + b  \alpha) x^{2} ~~~ if ~~ x \geq 0   \\
 ((1-\alpha) a^U + a \alpha) x^{3} + ((1-\alpha) b^L + b  \alpha) x^{2} ~~~ if ~~ x < 0
\end{array} \right.
\] 
 and 
\[ 
\tilde{f}_{\alpha}^{U}(x) = \left\{
\begin{array}{ll}
 ((1-\alpha) a^U + a \alpha)  x^{3} + ((1-\alpha) b^U + b  \alpha) x^{2} ~~~ if ~~ x \geq 0   \\
 ((1-\alpha) a^L + a \alpha)  x^{3} + ((1-\alpha) b^U + b  \alpha) x^{2} ~~~ if ~~ x < 0
\end{array} \right.
\] 

Adding these two functions, we get 
\[(\tilde{f}_{\alpha}^{L} + \tilde{f}_{\alpha}^{U} )(x) = (((1-\alpha) a^L + a \alpha) + ((1-\alpha) a^U + a \alpha)x^{3} ) + (((1-\alpha) b^L + b  \alpha) + ((1-\alpha) b^U + b  \alpha)) x^{2} 
\]
which is three times continuously differentiable function. \\

Taking $\tilde{a} = \tilde{1} = (0,1,3)$ and $\tilde{b} = \widetilde{-12} = (-13, -12, -11)$ and applying the necessary condition, 

\begin{eqnarray*}
\int_{0}^{1} \alpha [ D (\tilde{f}_{\alpha}^{L} + \tilde{f}_{\alpha}^{U}) (x)] d\alpha = 0,
\end{eqnarray*}
we get $x = 0$ and  $x = 6.8571$ critical points. Now checking sufficient conditions at the critical points,

 \[
\int_{0}^{1} \alpha [ D^{2}( \tilde{f}_{\alpha}^{L} + \tilde{f}_{\alpha}^{U} ) (x)] d\alpha \geq 0
\]
 for all $x \in \mathbb{R}$, we observe that $x = 0$ is a strict local maximizer and $x = 6.8571$ is a strict local minimizer of given fuzzy-valued function.
\end{example}

\begin{remark}
If we change the spread of the fuzzy data in the given fuzzy objective function, for instance, if $\tilde{a} = \tilde{1} = (-1,1,3)$ and $\tilde{b} = \widetilde{-12} = (-14, -12, -11)$ then strict local minimizer is changed to $x = 8.111$ from $x = 6.8571$. If $\tilde{a} = \tilde{1} = (-1,1,2)$ and $\tilde{b} = \widetilde{-12} = (-13, -12, -10)$ then strict local minimizer is changed to $x = 9.4667$ from $x = 6.8571$. If $\tilde{a} = \tilde{1} = (0,1,2)$ and $\tilde{b} = \widetilde{-12} = (-13, -12, -11)$ then strict local minimizer is changed to $x = 8$ from $x = 6.8571$ which is the same solution of the crisp objective function.
\end{remark}

\begin{example}\label{example2}
\[
Minimize ~~~\tilde{f}(x) = (\tilde{a} \odot x^{2}) \oplus (\tilde{b} \odot (-x)) \oplus \tilde{v}, ~x \in \Omega \subseteq \mathbb{R} 
\] 
where $\Omega $ is the set of all positive real numbers. $\tilde{a} = (a^L ,a,a^U ) $, $\tilde{b} = (b^L ,b, b^U ) $ and $\tilde{v} = (v^L ,v,v^U ) $ are appropriate triangular fuzzy numbers. \\

By fuzzy arithmetic, we have $\alpha$-level functions of the fuzzy-valued objective function are \\

\begin{equation}
\tilde{f}_{\alpha}^{L}(x) = 
 ((1-\alpha) a^L + a \alpha) x^{2} - ((1-\alpha) b^U + b  \alpha) x + ((1-\alpha) v^L + v  \alpha) 
\end{equation} 
 and 
\begin{equation}
\tilde{f}_{\alpha}^{U}(x) = 
 ((1-\alpha) a^U + a \alpha) x^{2} - ((1-\alpha) b^L + b  \alpha) x + ((1-\alpha) v^U + v  \alpha) 
\end{equation} 

Adding these two functions, we get 

\begin{eqnarray*}
(\tilde{f}_{\alpha}^{L} + \tilde{f}_{\alpha}^{U} )(x) 
& = & (((1-\alpha) a^L + a \alpha) + ((1-\alpha) a^U + a \alpha)x^{2} ) + (((1-\alpha) b^L \\
& + & b  \alpha) + ((1-\alpha) b^U + b  \alpha)) x +  ((1-\alpha) v^L + v \alpha) + ((1-\alpha) v^U + v \alpha)
\end{eqnarray*}
which is three times continuously differentiable function. \\

Taking $\tilde{a} = (0,1,4)$, $\tilde{b} = (0,3,4)$ and $\tilde{v} = (1,2,4)$. Applying the necessary condition, 

\begin{eqnarray*}
\int_{0}^{1} \alpha [ D (\tilde{f}_{\alpha}^{L} + \tilde{f}_{\alpha}^{U}) (x)] d\alpha = 0,
\end{eqnarray*}
we get a critical point $x = 1$. Now checking sufficient conditions at the critical point,

 \[
\int_{0}^{1} \alpha [ D^{2}( \tilde{f}_{\alpha}^{L} + \tilde{f}_{\alpha}^{U} ) (x)] d\alpha \geq 0
\]
 for all $x \in \Omega $, we observe that $x = 1$ is a strict local minimizer of given fuzzy-valued function.
\end{example}

\begin{remark}
It has been observed that the fuzzy-valued function $\tilde{f}(x) = ((0,1,4) \odot x^{2}) \oplus ((0,3,4) \odot (-x)) \oplus (1,2,4)$, $x \in \Omega $ is the set of all positive real numbers, is not comparable under the partial order relation- fuzzy-max order (see \cite{PA1}). But the fuzzy-valued function is comparable under linear order stated in this paper. By considering the fuzzy optimization problem under linear order relation, we have obtained a local minimizer point.
\end{remark}

Another illustration is for multi-variable fuzzy optimization problem taken from \cite{PA}.
\begin{example}\label{ex1}
Let $\tilde{f}: \mathbb{R}^{2} \to F(\mathbb{R})$ be defined by $\tilde{f}(x_{1},x_{2}) = (0,2,4)\odot x_{1}^{2} \oplus (0,2,4)\odot x_{2}^{2} \oplus (1,3,5)$, where $(0,2,4)$ and $(1,3,5)$ are triangular fuzzy numbers. \\

This fuzzy-valued function is not H-differentiable as in the first (second) term, the derivative of the real-valued function  
$x_{1}^{2}$ ($x_{2}^{2}$) is negative. Therefore, we can not apply necessary and sufficient optimality conditions 
proposed Pathak and Pirzada \cite{PA} for this function. But if we restrict the domain of fuzzy-valued function $\mathbb{R}^{2}_{+}$ then 
we can see that the function is H-differentiable. By applying first order necessary condition of Pathak and Pirzada \cite{PA}, the critical point 
is $\bar{x}^{*} = (0,0)$ but this point is not a interior point of the domain so we can not check the sufficient conditions. 

It is to be noticed that the fuzzy-valued function in \textit{Example \ref{ex1}} is not H-differentiable but it is gH-differentiable on $\mathbb{R}$. By applying first order necessary condition proposed in this paper, we get the critical point $\bar{x}^{*} = (0,0)$. We can easily verify the proposed results of second order necessary and sufficient conditions for the minimization of the critical point $\bar{x}^{*} = (0,0)$. Therefore, we say that $\bar{x}^{*} = (0,0)$ is a strict local minimum of the given fuzzy-valued function under linear order relation. 
\end{example}
We discuss a case study as an illustration. 

\begin{example} The decision maker allows to vary the coefficients of the objective function ( the approximate profit ) in a certain range rather than exact real numbers. This variation in the real numbers is represented mathematically using fuzzy numbers. Therefore, in the situation when the coefficients are known approximately, the approximate profit per acre of a certain farm is represented by the following non-linear fuzzy-valued function:
\[
\tilde{f}(x_{1},x_{2}) = \widetilde{20}\odot x_{1} \oplus \widetilde{26} \odot x_{2} \oplus (\tilde{4} \odot x_{1}\cdot x_{2})) \oplus (\widetilde{-4})\odot x_{1}^{2}) \oplus (\widetilde{-3})\odot x_{2}^{2}), ~(x_{1},x_{2}) \in \Omega \subset\mathbb{R}^2, 
\] 
where $\widetilde{20}=(19, 20, 21)$, $\widetilde{26}=(25, 26, 27)$, $\tilde{4}=(2, 4, 6)$, $\widetilde{-4}=(-6, -4, -2)$, $\widetilde{-3}=(-5, -3, -2)$ are fuzzy numbers and $x_{1}, x_{2}$ denotes respectively, the labour cost and the fertilizer cost. We have to determine the value of $x_{1}$ and  $x_{2}$ to maximize the approximate profit. \\

Using fuzzy arithmetic, we can write
\[ 
\tilde{f}_{\alpha}^{L}(x_{1},x_{2}) = \left\{
\begin{array}{ll}
 (19 + \alpha)x_{1}+ (25 + \alpha)x_{2}+(2+2\alpha)x_{1}\cdot x_{2}+(-6 + 2\alpha)x_{1}^2 +(-5+2\alpha)x_{2}^2  , ~~~ if ~~ x_{1},x_{2} \geq 0   \\
(21 - \alpha)x_{1}+ (25 + \alpha)x_{2}+(6-2\alpha)x_{1}\cdot x_{2}+(-6 + 2\alpha)x_{1}^2 +(-5+2\alpha)x_{2}^2  , ~~~ if ~~ x_{1} <0,x_{2} \geq 0   \\
(19 + \alpha)x_{1}+ (27 - \alpha)x_{2}+(6-2\alpha)x_{1}\cdot x_{2}+(-6 + 2\alpha)x_{1}^2 +(-5+2\alpha)x_{2}^2  , ~~~ if ~~ x_{1}\geq 0,x_{2} <0   \\
 (21 - \alpha)x_{1}+ (27 - \alpha)x_{2}+(2+2\alpha)x_{1}\cdot x_{2}+(-6 + 2\alpha)x_{1}^2 +(-5+2\alpha)x_{2}^2 ~~~ if ~~ x_{1},x_{2} < 0
\end{array} \right.
\] 
 and 
\[ 
\tilde{f}_{\alpha}^{U}(x_{1},x_{2}) = \left\{
\begin{array}{ll}
 (21 - \alpha)x_{1}+ (27 - \alpha)x_{2}+(6-2\alpha)x_{1}\cdot x_{2}+(-2 - 2\alpha)x_{1}^2 +(-2-\alpha)x_{2}^2 , ~~~ if ~~ x_{1},x_{2} \geq 0   \\
(19 + \alpha)x_{1}+ (27 - \alpha)x_{2}+(2+2\alpha)x_{1}\cdot x_{2}+(-2 - 2\alpha)x_{1}^2 +(-2-\alpha)x_{2}^2  , ~~~ if ~~ x_{1} <0,x_{2} \geq 0   \\
(21 - \alpha)x_{1}+ (25 + \alpha)x_{2}+(2+2\alpha)x_{1}\cdot x_{2}+(-2 - 2\alpha)x_{1}^2 +(-2-\alpha)x_{2}^2  , ~~~ if ~~ x_{1}\geq 0,x_{2} <0   \\
 (21 - \alpha)x_{1}+ (27 - \alpha)x_{2}+(6-2\alpha)x_{1}\cdot x_{2}+(-2 - 2\alpha)x_{1}^2 +(-2-\alpha)x_{2}^2 ~~~ if ~~ x_{1},x_{2} < 0.
\end{array} \right.
\] 
Though the level functions $\tilde{f}_{\alpha}^{L}(x_{1},x_{2})$ and $\tilde{f}_{\alpha}^{U}(x_{1},x_{2})$ are not differentiable at $x_{1} = x_{2} = 0 $, the fuzzy-valued function is gH-differentiable. By adding the level functions, we get 

\begin{eqnarray*}
(\tilde{f}_{\alpha}^{L} + \tilde{f}_{\alpha}^{U} )(x) 
= 40x_{1} + 52x_{2} + 8x_{1}x_{2} - 8x_{1}^{2} + (-7 + \alpha)x_{2}^{2}.
\end{eqnarray*}
We apply the necessary optimality condition, 

\begin{eqnarray*}
\int_{0}^{1} \alpha [ \nabla (\tilde{f}_{\alpha}^{L} + \tilde{f}_{\alpha}^{U}) (x)] d\alpha = 0.
\end{eqnarray*}
Which gives a critical point $(x_{1},x_{2}) = (173/26,108/13)$. Checking sufficient condition at the critical point, i.e.

 \[
\int_{0}^{1} \alpha [ \nabla^{2}( \tilde{f}_{\alpha}^{L} + \tilde{f}_{\alpha}^{U} ) (x)] d\alpha 
\]
 negative definite, for all $x \in \Omega $. Therefore, the critical point is a strict local maximizer of given fuzzy-valued function.

\end{example}
 

\section{Conclusions}
The local and global minimizer of multi-variable fuzzy optimization problem are studied using the necessary and sufficient 
optimality conditions. The optimality conditions to obtain the solution of unconstrained fuzzy optimization problems are 
proposed earlier Pathak and Pirzada \cite{PA},\cite{PA2}. The comparison of the optimality conditions is given in \textbf{Table 1}.
The class of Hukuhara and generalized Hukuhara differentiable functions are given in \textbf{Table 2}.
\begin{center}
\begin{table}
\caption{Comparison of optimality conditions}
\label{tab:1}
     \begin{tabular}{ | p{2.0cm} | p{2.5cm} | p{2.5cm} | p{4.5cm} |}

\hline
Research work & Hypothesis & Optimality conditions & Comments \\ \hline
Pathak and Pirzada (2011) \cite{PA2} & {Hukuhara differentiability and Linear order on $F_L(\mathbb{R})$} & Parametric optimality conditions in terms of lower level functions $\tilde{f}_{\alpha}^{L}$ with $\alpha = 0,1$ &  
\begin{itemize}
\item[1.] Hukuhara differentiability exists under very restrictive conditions.  
\item[2.] Limited class of fuzzy-valued functions are H-differentiable. 
\item[3.] The order relation is linear, but it is defined on L shaped  fuzzy numbers. 
\item[4.] The optimality conditions obtained are in parametric form.
\end{itemize}  \\ \hline
Pathak and Pirzada (2013) \cite{PA} & Hukuhara differentiability and Partial order on $F(\mathbb{R})$ &  Optimality conditions in terms of $\alpha$'s & 
\begin{itemize}
\item[1.] Hukuhara differentiability exists under very restrictive conditions. 
\item[2.] Limited class of fuzzy-valued functions are H-differentiable.
\item[3.] The order relation is partial, so not all fuzzy numbers are comparable under this order relation. 
\end{itemize} \\ \hline
Pirzada Current work & Generalized Hukuhara differentiability and Linear order on $F(\mathbb{R})$ & Optimality conditions are in integral form & 
\begin{itemize}
\item[1.] Generalized Hukuhara differentiability is more general. 
\item[2.] More class of functions are gH-differentiable (See \textbf{Table 2}). 
\item[3.] The linear order relation is defined on whole class of fuzzy numbers. 
\item[4.] Any two fuzzy numbers are comparable under this order relation. 
\item[5.] The optimality conditions are in more general form as they are independent of any parameters and $\alpha$'s. 
\item[6.] We can apply these conditions to bigger (general) class fuzzy optimization problems.  
\end{itemize} \\ \hline

    \end{tabular}
\end{table}
\end{center}

\begin{center}
\begin{table}
\caption{Class of Hukuhara and generalized Hukuhara differentiable functions (Pirzada and Vakaskar \cite{PI1})}
\label{tab:2}
     \begin{tabular}{ | p{4.0cm} | p{3.5cm} | p{4.5cm} |}
\hline
Fuzzy-valued function & Hukuhara differentiability & Generalized Hukuhara differentiability \\ \hline
Constant function: $\tilde{f}(x) = \tilde{a}$, $\tilde{a}$ is a fuzzy number & For all $x$ in $\mathbb{R}$ & For all $x \in \mathbb{R}$ \\ \hline
Linear function: $\tilde{f}(x) = \tilde{a}\odot x$ & For $x > 0$ & For all $x \in \mathbb{R}$  \\ \hline
Quadratic function: $\tilde{f}(x) = \tilde{a} \odot x^2$ & For $x > 0$ & For all $x \in \mathbb{R}$ \\ \hline
$\tilde{f}(x) = \tilde{a}\odot x^{n}$, $n$ is a positive integer & n-times H-differentiable for $x \in \mathbb{R}_{+}$ & n-times gH-differentiable for all $x \in \mathbb{R}$  \\ \hline
$\tilde{f}(x) = \tilde{a}_{n}\odot x^{n} \oplus \tilde{a}_{n−1} \odot x^{n−1} \oplus ... \oplus \tilde{a}_{1} \odot x \oplus \tilde{a}_{0}
$, $n \geq 0$ & n-times H-differentiable for $x \in \mathbb{R}_{+}$ & n-times H-differentiable for $x \in \mathbb{R}$  \\ \hline
Sine function: $\tilde{f}(x) = \tilde{a} \odot sin(x)$ defined on $[\pi, \pi]$ & One time H-differentiable in $[0, \pi/2]$ & n-times gH-differentiable in $[0,\pi]$  \\ \hline
Exponential function: $\tilde{f}(x) = \tilde{a}\odot e^{x} $ & n-times differentiable on $\mathbb{R}_{+} \cap \{0\}$ & n-times gH-differentiable on $\mathbb{R}$  \\ \hline
    \end{tabular}
\end{table}
\end{center}
\begin{acknowledgements}
UMP is thankful to National Board for Higher Mathematics (NBHM), Department of Atomic Energy (DAE), India, for supporting this 
work. UMP is also thankful to B S Ratanpal (Department of Applied Mathematics, The Maharaja Sayajirao University of Baroda, India)
for necessary suggestions.
\end{acknowledgements}

\end{document}